\documentclass{article}
\usepackage[utf8]{inputenc}
\usepackage{amsmath}
\usepackage{mathtools}
\usepackage{amssymb}
\usepackage{amsthm}
\usepackage{amsfonts}
\usepackage{geometry}
\usepackage{xcolor}
\usepackage{pgf,tikz}
\usepackage{fancyhdr}
\usepackage{sectsty}
\usepackage{graphicx}
\usepackage{dsfont}
\usepackage{enumitem}
\usepackage{setspace}
\usepackage{lipsum}
\usepackage{gensymb}
\usepackage{graphicx}
\usepackage{caption}
\usepackage{subcaption}
\usepackage{mathdots}
\usepackage{hyperref}
\usepackage{multicol}
\usepackage{parskip}
\usepackage{float}
\usepackage{caption}

\theoremstyle{plain}
\newtheorem{theorem}{Theorem}%[section]
\newtheorem{lemma}[theorem]{Lemma}
\newtheorem{proposition}[theorem]{Proposition}

\theoremstyle{definition}
\newtheorem{definition}[theorem]{Definition}

\theoremstyle{remark}
\newtheorem{remark}[theorem]{Remark}

\title{A New Bound on Odd Multicrossing Numbers of Knots and Links}
\author{Anshul Guha}
\date{October 2020}

\begin{document}

\maketitle
%\begin{multicols}{2}

\begin{abstract}

An $n$-crossing projection of a link $L$ is a projection of $L$ onto a plane such that $n$ points on $L$ are superimposed on top of each other at every crossing. We prove that for all $k \in \mathbb{N}$ and all links $L$, the inequality $$c_{2k+1}(L) \geq \frac{2g(L) + r(L)-1}{k^2}$$ holds, where $c_{2k+1}(L)$, $g(L)$, and $r(L)$ are the $(2k+1)$-crossing number, $3$-genus, and number of components of $L$ respectively. This result is used to prove a new bound on the odd crossing numbers of torus knots and generalizes a result of Jablonowski (see \cite{unknown}).

We also prove a new upper bound on the $5$-crossing numbers of the 2-torus knots and links. Furthermore, we improve the lower bounds on the $5$-crossing numbers of $79$ knots with $2$-crossing number $ \leq 12$. Finally, we improve the lower bounds on the $7$-crossing numbers of $5$ knots with $2$-crossing number $\leq 12$.

\end{abstract}

\section{Introduction}

The modern study of knots first originated in the field of chemistry. In the 1800s, it was commonly believed that all of space was filled with a substance called luminiferous ether. Lord Kelvin attempted to explain the existence of the distinct elements of the periodic table by conjecturing that each element corresponded to a different type of knot in the ether. Kelvin's theory inspired Peter Guthrie Tait and C. N. Little to try to tabulate all the possible knots. By doing so, they hoped to create a ``periodic table" where each knot corresponded to a different element. 

In 1887, the Michelson-Morley experiment disproved the existence of the luminiferous ether, rendering Kelvin's theory incorrect. However, the tables of knot tabulations were still useful to mathematicians who studied knots for their own sake. Since then, mathematicians have devised increasingly sophisticated and powerful invariants to classify and distinguish knots. Some of these invariants include the crossing number, bridge number, knot genus, Alexander-Conway Polynomials, the Jones Polynomial, the HOMFLY-PT polynomial, and more. 

In recent years, knots have become useful to molecular biologists. For example, knot theory can be used to determine the chirality of a molecule. Additionally, the typical DNA helix has a similar structure as the 2-torus knots. Furthermore, molecular knots, known as knotanes, appear naturally in the shapes of the $3_1$, $4_1$, $5_1$, and $6_2$ knots of the Rolfsen Knot Table. Knotanes have also been synthetically synthesized in the shapes of the $3_1$, $4_1$, $5_1$, and $8_{19}$ knots. Due to their globular shapes, synthetic knotanes have the potential to be the building blocks in nanotechnology. 

A {\em knot projection} is a picture of the projection of a knot into a plane. A crossing is formed whenever two points on the knot are superimposed upon each other in a knot projection.

\begin{figure}[H]
    \begin{center}
        \captionsetup{width=.7\linewidth}
        \includegraphics[scale=1.0]{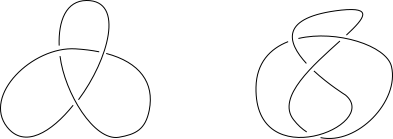}
        \caption{Left: A projection of the trefoil knot $3_1$. Right: A projection of the figure-eight knot $4_1$.}
    \end{center}
\end{figure}

We study {\em multicrossing knot projections}, a generalization of knot projections, where possibly more than two points on the knot are superimposed upon each other at every crossing. In an $n$-crossing projection of a knot, exactly $n$ points on the knot are superimposed upon each other at every crossing.

\begin{figure}[h!]
    \begin{center}
        \captionsetup{width=.8\linewidth}
        \includegraphics[scale=0.8]{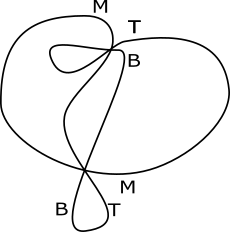}
        \caption{A 3-crossing projection of the trefoil knot. $T$ denotes the topmost strand at the crossing, $M$ denotes the middle strand, and $B$ denotes the bottom strand.}
    \end{center}
\end{figure}

A single $n$-crossing of a knot can be {\em decomposed} into $\frac{n(n-1)}{2}$ $2$-crossings by slightly moving each of the strands until no three strands intersect at the same point. The decomposition of an $n$-crossing is not unique; for example, two possible decompositions of a $4$-crossing are shown in Figure 3:

\begin{figure}[H]
    \begin{center}
        \includegraphics[scale=0.8]{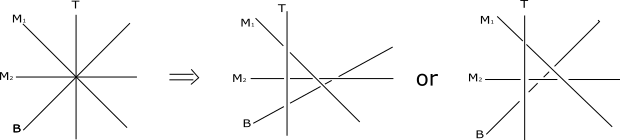}
        \caption{A 4-crossing where $T$ is the top strand, $M$, is the strand below $T$, $M_2$ is the strand below $M_1$, and $B$ is the bottom strand. The two figures on the right are two ways of decomposing a $4$-crossing into six $2$-crossings.}
    \end{center}
\end{figure}

The \textit{$n$-crossing number} of a knot $K$, denoted by $c_n(K)$, is the least number of $n$-crossings required in an $n$-crossing projection of a knot. Due to  Lemma 4.3 of \cite{2012arXiv1207.7332A}, every knot has an $n$-crossing diagram for all $n$, so the value $c_n(K)$ is always well-defined for all $n \geq 2$. Many papers have proved various bounds on $c_n(K)$, either for specific $n$ or for all knots $K$ in general (see \cite{2012arXiv1207.7332A, 2014MPCPS.156..241A,2014arXiv1407.4485A,2017arXiv170609333A}).

Because each $n$-crossing is equivalent to a collection of $\frac{n(n-1)}{2}$ $2$-crossings, a trivial bound on $c_n(K)$ is given by
\begin{equation}\label{eq:trivialbound}
c_n(K) \geq \frac{2c_2(K)}{n(n-1)}.
\end{equation}
This follows from the decomposition of each of the $c_n(K)$ $n$-crossings into $\frac{n(n-1)}{2}$ $2$-crossings.
In \cite{2014arXiv1407.4485A}, the authors used the \textit{Kauffman bracket polynomial} $\langle K \rangle$ to improve this bound to $$c_n(K) \geq \frac{\text{span} \langle K \rangle}{\left\lfloor{\frac{n^2}{2}}\right\rfloor + 4n-8}.$$

Here, $\text{span} \langle K \rangle$ denotes the difference between the largest exponent and smallest exponent of the variable of $\langle K \rangle$.

In \cite{unknown}, the author proves that for any knot or link $L$, the inequality $c_{3}(L) \geq 2g(L) + r(L) - 1$ holds, where $g(L)$ and $r(L)$ denote the $3$-genus and number of components of $L$ respectively. We extend the work of \cite{unknown} to prove 

\begin{theorem}\label{thm:thm1}
\textit{For any knot or link L and any positive integer $k$, we have the inequality}
$$c_{2k+1}(L) \geq \frac{2g(L) + r(L)-1}{k^2}.$$
\end{theorem}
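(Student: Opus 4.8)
The plan is to reduce the general odd-crossing case to the known $3$-crossing result $c_3(L) \ge 2g(L) + r(L) - 1$ from \cite{unknown}. The natural strategy is to take an optimal $(2k+1)$-crossing projection of $L$ realizing $c_{2k+1}(L)$ and convert it into a $3$-crossing projection, controlling how many $3$-crossings the conversion produces in terms of the number of $(2k+1)$-crossings. If I can show that each single $(2k+1)$-crossing can be replaced by a bounded number of $3$-crossings — and the bound I am aiming for is $k^2$, since the target inequality is $c_{2k+1}(L) \ge (2g(L)+r(L)-1)/k^2$ — then applying the $3$-crossing bound to the resulting diagram finishes the proof.

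Concretely, I would first establish the key local lemma: a single $(2k+1)$-crossing, where $2k+1$ strands pass through one point in a fixed vertical order, can be locally redrawn using at most $k^2$ $3$-crossings (and no ordinary $2$-crossings). The idea is to group the $2k+1$ strands and resolve the single high-multiplicity crossing into several triple points, analogous to how the decomposition into $2$-crossings in Figure~3 works, but stopping at multiplicity $3$ rather than $2$. I expect the combinatorics here to mirror the $\binom{n}{2}$ count for the $2$-crossing decomposition: for a $3$-crossing resolution one wants to cover all the pairwise (or triple-wise) interactions among the $2k+1$ strands using triple points, and a careful grouping of the strands into overlapping triples should yield a count of exactly $k^2$. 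I would verify the count $k^2$ by an explicit scheme — for instance, arranging the strands and sweeping through them in a pattern that produces $k$ rows of $k$ triple points each, so that every required strand interaction is captured at some triple point.

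Once the local lemma is in hand, the global argument is straightforward bookkeeping. Starting from a minimal $(2k+1)$-crossing diagram $D$ of $L$ with exactly $c_{2k+1}(L)$ multicrossings, I replace each multicrossing by its local $3$-crossing resolution. Since the replacements are supported in disjoint neighborhoods of the original crossings and the resolution does not change the link type (only the diagrammatic presentation), the result is a $3$-crossing diagram $D'$ of the same link $L$ with at most $k^2 \cdot c_{2k+1}(L)$ triple crossings. Hence $c_3(L) \le k^2 \cdot c_{2k+1}(L)$. Combining this with the bound $c_3(L) \ge 2g(L) + r(L) - 1$ gives
\[
k^2 \cdot c_{2k+1}(L) \ge c_3(L) \ge 2g(L) + r(L) - 1,
\]
and dividing by $k^2$ yields the claimed inequality.

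The main obstacle I anticipate is the local lemma — specifically, pinning down a resolution scheme that achieves the exact bound $k^2$ rather than some larger quantity, and verifying rigorously that such a scheme genuinely exists for every $k$ and preserves the link type. The naive approach of decomposing a $(2k+1)$-crossing all the way into $2$-crossings gives $\binom{2k+1}{2} = k(2k+1)$ crossings, which is far more than $k^2$; so the improvement must come from genuinely exploiting triple points to resolve three strands at once. I would look for an inductive or explicitly geometric construction, perhaps by partitioning the $2k+1$ strands into blocks and showing that interactions within and between blocks can be funneled through triple crossings efficiently. Getting this count tight, and confirming it is achievable for all $k$ simultaneously via a uniform construction, is where the real work lies.
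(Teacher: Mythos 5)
Your reduction is genuinely different from the paper's argument, and its global bookkeeping is sound: if every $(2k+1)$-crossing could be resolved, inside a small disk and fixing its boundary, into at most $k^2$ triple points (and no double points) with heights inherited from the multicrossing, then $c_3(L) \le k^2\, c_{2k+1}(L)$, and the bound $c_3(L) \ge 2g(L)+r(L)-1$ of \cite{unknown} immediately gives the theorem. But as written the proof has a genuine gap: the local lemma, which you yourself flag as ``where the real work lies,'' is never proved, and the hints you give do not pin it down. A scheme of ``$k$ rows of $k$ triple points'' that ``covers every required strand interaction'' is not the right specification. Because the $2(2k+1)$ strand-ends are fixed on the boundary of the disk, each pair of strands must cross an \emph{odd} number of times after the resolution (mod-2 intersection numbers rel boundary are preserved), not merely at least once; all crossings must be genuine triple points; the arrangement must be realizable in the plane with the prescribed cyclic boundary data; and the vertical orderings at each triple point must come from the original height ordering so the link type is unchanged. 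In particular a Steiner-type covering in which each pair of strands meets at exactly one triple point is impossible in general, since it would require $\binom{2k+1}{2}/3 = k(2k+1)/3$ triple points, which is not even an integer unless $3 \mid k(2k+1)$; any valid scheme must let some pair cross three or more times, which your ``covering'' heuristic does not account for.

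The gap is fillable, which is worth recording because it would make your route work: resolve a $(2k+1)$-crossing recursively by pulling two strands $a,b$ into a chain crossing each other $2k-1$ times, threading each of the remaining $2k-1$ strands through one crossing of that chain (giving $2k-1$ triple points), and letting those $2k-1$ strands pass through one common point, which is a $(2k-1)$-crossing to be resolved by induction; the count is $(2k-1)+(k-1)^2=k^2$, the pair $\{a,b\}$ crosses $2k-1$ times and every other pair an odd number of times by induction, and one can check that the cyclic order of endpoints on the disk boundary matches that of $2k+1$ concurrent strands. With such a lemma supplied, your argument closes and is quite economical at the global level. By contrast, the paper never passes through $c_3$ at all: it decomposes each $(2k+1)$-crossing into $k(2k+1)$ ordinary double crossings (Lemma \ref{lemma:lemma7}), uses piecewise-natural orientations and the oriented checkerboard coloring (Lemma \ref{l8}) to show the Seifert circles of the decomposed diagram biject with the white (or black) faces (Lemma \ref{lemma:lemma12}), counts faces via the Euler characteristic, and applies the genus formula of Proposition \ref{prop:prop11}. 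The paper's method needs no external $c_3$ input but requires that chain of lemmas; yours outsources the genus-theoretic content to the known $k=1$ case of \cite{unknown}, but stands or falls with the local resolution lemma you left unproved.
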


The rest of the paper is organized as follows. In Section $2$, we present the necessary definitions and develop some tools for the proof of Theorem \ref{thm:thm1}. We prove Theorem \ref{thm:thm1} in Section $3$. Section $4$ covers our findings on an upper bound on the $5$-crossing number of the $2$-torus knots.

\section{Preliminaries}

An {\em n-crossing diagram} of a projection of a link $L$ is a planar graph created from an n-crossing projection of $L$ (see, e.g., Figure \ref{Figure:trefoil}). Every vertex of an $n$-crossing diagram has degree $2n$.

\begin{figure}[H]
    \begin{center}
        \captionsetup{width=0.8\linewidth}
        \includegraphics[scale=0.9]{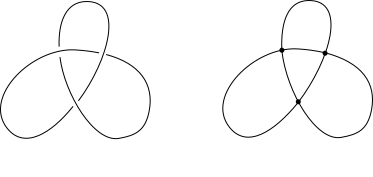}
        \caption{A projection of the trefoil knot is to the left. Its corresponding diagram is to the right. In this example, $n = 2$.}
        \label{Figure:trefoil}
    \end{center}
\end{figure} 

\begin{definition}
An \textit{odd-crossing projection} of a link $L$ is an $n$-crossing projection of $L$ where $n$ is odd. 
\end{definition} 

\begin{definition}
A \textit{piecewise natural orientation} of an odd-crossing diagram of a link $L$ is an orientation on each edge of the diagram such that the boundary of each face of the diagram is oriented (clockwise or counterclockwise). A projection $P$ of $L$ combined with an orientation on each component of $P$ is a {\em piecewise natural projection} if the orientation of the corresponding diagram $D$ of $P$ with the same orientation on each edge of $D$ as on $P$ is a piecewise natural orientation.
\end{definition}

\begin{figure}[H]
    \captionsetup{width=0.8\linewidth}
    \begin{center}
        \includegraphics[scale=0.5]{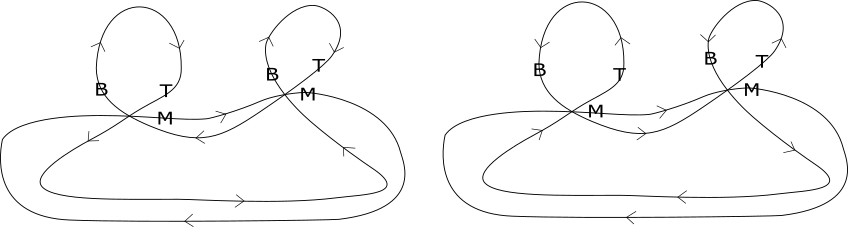}
        \caption{Both of these diagrams are diagrams of the $T_{4,2}$ torus link. The diagram on the left is a piecewise natural orientation and the diagram on the right is not a piecewise natural orientation.}
    \end{center}
\end{figure} 

\begin{proposition}[{\cite{2017arXiv170609333A}}]
\textit{It is always possible to impose a piecewise-natural orientation on an odd-crossing diagram $D$ of a link $L$.}
\end{proposition}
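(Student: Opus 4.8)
The plan is to build the orientation from a checkerboard coloring of the faces of $D$ and then to show—this is where the hypothesis that $n$ is odd becomes essential—that the resulting edge-orientation is in fact induced by an orientation of the components of $L$, so that it comes from a genuine (oriented) projection. First I would observe that every vertex of $D$ has even degree $2n$, since each of the $n$ strands meeting at a crossing contributes exactly two edge-ends. A connected plane graph has even degree at every vertex if and only if its faces admit a proper $2$-coloring (a checkerboard coloring) in which faces sharing an edge receive opposite colors; I would invoke this standard fact, applying it componentwise should $D$ be disconnected. Fixing such a coloring with colors black and white, I would define the candidate orientation by orienting each edge so that its black neighboring face lies on the left. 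Because every edge separates a black face from a white face this is well defined, and traversing the boundary of any face with that face on a fixed side shows the boundary is a directed cycle. Hence this orientation already satisfies the definition of a piecewise natural orientation of the diagram.

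The heart of the argument is the local picture at a single vertex $v$ of degree $2n$. I would label the edge-ends $e_0, e_1, \dots, e_{2n-1}$ in cyclic order and let $F_j$ denote the face-corner between $e_j$ and $e_{j+1}$; the corners alternate in color around $v$ because there are an even number $2n$ of them. I would then check that in the checkerboard orientation the edges alternate between pointing toward $v$ and pointing away from $v$: the direction of each $e_i$ is forced by its unique black neighboring corner, and since that black corner sits on alternating sides as $i$ increases, the induced in/out status alternates. Consequently exactly $n$ of the edges at $v$ point inward and $n$ point outward, in strict alternation, so $e_i$ and $e_{i+n}$ carry opposite in/out status exactly when $n$ is odd.

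Finally I would pass from this diagram orientation to a genuine orientation of $L$. At an $n$-crossing the $n$ strands pass transversally and straight through the crossing point, so each strand enters along some edge $e_i$ and leaves along the antipodal edge $e_{i+n}$. Since $n$ is odd, the indices $i$ and $i+n$ have opposite parity, and by the alternation established above exactly one of $e_i, e_{i+n}$ points into the crossing while the other points out. Thus the prescribed edge-directions are consistent along every strand: there is no edge at which the induced direction reverses, so the orientation flows through each crossing and restricts to a coherent orientation of every component of $L$. This realizes the checkerboard orientation as the orientation induced by an oriented projection of $L$, completing the construction of a piecewise natural projection.

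I expect the main obstacle to be precisely this third step—reconciling the face-coherent orientation with the strand structure at each crossing. The parity bookkeeping there is exactly what breaks down when $n$ is even, since antipodal edges then share parity and would force some strand to be oriented into the crossing at both ends, an impossibility; this is where the oddness hypothesis is indispensable and where I would take the most care, alongside the minor points of justifying face $2$-colorability and the antipodal pairing of edges at a multicrossing.
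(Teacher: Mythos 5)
Your proof is correct, but note that there is no internal proof to compare it against: the paper imports this proposition from \cite{2017arXiv170609333A} as a black box and never proves it. What you have written is essentially a self-contained reconstruction that runs the paper's own machinery in reverse. The paper's Lemma \ref{l8} starts from a given piecewise-natural orientation and \emph{derives} the checkerboard coloring (clockwise faces white); you start from a checkerboard coloring, whose existence you correctly ground in the even vertex degree $2n$, and \emph{derive} the orientation by the black-on-the-left rule. Your vertex-local analysis (in/out status alternating with the index of the edge-end) is precisely the criterion of the paper's Lemma \ref{lemma:lemma5}, reproved from the coloring rather than assumed. Most importantly, your third step --- checking that the edge-orientation is consistent along strands because antipodal edge-ends $e_i$ and $e_{i+n}$ have opposite parity exactly when $n$ is odd --- is the genuine mathematical content of the proposition, and it is exactly the point the paper only gestures at in its remark after Lemma \ref{lemma:lemma5} about why the definition breaks down for even crossings. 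Your version makes the role of oddness explicit and verifiable, which the paper's presentation does not. Two small points worth patching: the black-on-the-left rule is well defined only if every edge borders two \emph{distinct} faces, i.e., if $D$ has no bridges; this is automatic here because a graph in which every vertex has even degree is bridgeless, but it deserves a sentence. Likewise, your appeal to face $2$-colorability ``componentwise'' for disconnected diagrams is fine but is cleaner via the global parity argument (the edge set of an even plane graph lies in the cycle space, so the mod-$2$ winding coloring is well defined on all of $D$ at once).
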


%\begin{proof}
%The proof is given in \cite{2017arXiv170609333A}.
%\end{proof} 

Now, we extend Lemmas $1$ and $2$ of \cite{2017arXiv170609333A} to prove

\begin{lemma}\label{lemma:lemma5}
An orientation of a diagram $D$ of a link $L$ is piecewise-natural iff the edges around every vertex alternate in an in-out-in-out  fashion, as in the following diagram.
\end{lemma}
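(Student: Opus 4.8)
The plan is to prove this as a biconditional by analyzing what happens at a single vertex and relating the local "in-out" structure to the global property that every face boundary is coherently oriented. The key observation is that a piecewise-natural orientation is defined by the condition that the boundary of each face runs consistently (all clockwise or all counterclockwise around that face). Since each edge borders exactly two faces, an orientation on an edge contributes to the boundary traversals of both adjacent faces, and the constraint that both face boundaries be coherent is exactly what pins down the local alternation pattern at each vertex. So I would set up the equivalence by examining the $2n$ edges incident to a single vertex and the $2n$ face-corners ("wedges") between consecutive edges in the cyclic order around that vertex.

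**First I would** prove the forward direction ($\Rightarrow$). Assume the orientation is piecewise-natural. Fix a vertex $v$ and list the $2n$ incident edges in cyclic order $e_1, e_2, \dots, e_{2n}$. Between each consecutive pair $e_i, e_{i+1}$ lies a corner belonging to some face $F$, and $F$'s boundary passes through $v$ by entering along one of these two edges and leaving along the other. Because $F$ is coherently oriented, the traversal of $\partial F$ that goes through this corner must \emph{enter} $v$ along one edge and \emph{exit} along the other; hence of the two edges bounding any single face-corner, exactly one points into $v$ and one points out. I would then argue that as we walk around the cyclic sequence of corners, this forces the in/out labels on the edges to alternate: edges $e_i$ and $e_{i+1}$ sharing a corner must have opposite orientation (relative to $v$), and since this holds for every consecutive pair around the full cycle of even length $2n$, the pattern is exactly in-out-in-out.

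**For the converse** ($\Leftarrow$), I would assume the edges alternate in-out around every vertex and show each face boundary is coherent. Pick any face $F$ and traverse its boundary, which is a cyclic sequence of edges meeting at vertices. At each vertex $v$ along $\partial F$, the two edges of $F$ incident to $v$ bound a common corner; by the alternation hypothesis one of them points into $v$ and the other out, so the boundary traversal passes through $v$ consistently (in along one, out along the other) rather than reversing direction. Since this coherence holds at every vertex of $\partial F$, the entire boundary of $F$ is consistently oriented, which is the definition of piecewise-natural. I would note that the alternation pattern is only possible because every vertex has even degree $2n$, which is guaranteed for a diagram.

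**The main obstacle** I anticipate is making the bookkeeping rigorous at the interface between the local picture (in/out edges at a vertex) and the global one (traversal of a face boundary), specifically ensuring that "the two edges of $F$ at $v$ are cyclically adjacent in the edge-ordering around $v$" — i.e., that a face occupies a single wedge (corner) at each of its vertices and that these wedges are precisely the gaps between consecutive edges. This is where the planarity of the diagram and the correspondence between faces and corners must be invoked carefully; once that adjacency is established, the alternation argument is a short parity observation on a cycle of even length. I would lean on the structure already developed in Lemmas $1$ and $2$ of \cite{2017arXiv170609333A} to handle this corner--face correspondence cleanly rather than reproving it from scratch.
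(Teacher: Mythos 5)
Your proposal is correct and follows essentially the same route as the paper: the forward direction rests on the observation that a coherently oriented face forces the two edges of each corner at a vertex to have opposite in/out orientations (the paper phrases this contrapositively, deriving a contradiction from two adjacent edges both pointing in or both out), and the converse traverses an arbitrary face boundary and uses the alternation at each vertex to show the traversal never reverses, so the face is coherently oriented. The only cosmetic difference is that you state the forward direction directly rather than by contradiction.
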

\begin{figure}[H]
    \captionsetup{width=0.8\linewidth}
    \begin{center}
        \includegraphics[scale=0.5]{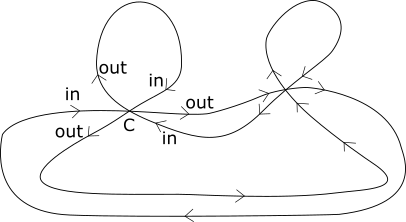}
        \caption{In this diagram of the torus link $T_{4,2}$, there are two vertices which correspond to two $3$-crossings in the corresponding projection of $P$ of $T_{4,2}$. The orientations of the six edges around each vertex alternate in an in-out in-out in-out fashion.}
    \end{center}
\end{figure}
\begin{proof}
First, we prove the forward direction. Consider a piecewise-natural diagram $D$ of $L$, and assume for the sake of contradiction that there exists a crossing $C$ where two adjacent edges $a$, $b$ are oriented inward toward $C$. Then the face $F$ that is partially bounded by the union of $a$ and $b$ cannot be oriented either clockwise or counterclockwise, because one of $\{a, b\}$ is oriented clockwise with respect to $F$ and the other is oriented counterclockwise with respect to $F$. A similar proof applies for the case were $a, b$ are oriented outwards from $C$. 

Next, we prove the reverse direction. Assume that in a knot diagram $D$, every crossing is oriented such that the edges around every crossing alternate. Consider an arbitrary face $F$ of $D$, which is bounded by some vertices and some edges within $D$. If we traverse the boundary of $F$, starting from an arbitrary vertex and following the orientation of each edge, there is no vertex at which we are forced to stop, since that would imply that two adjacent edges bounding $F$ both point inwards or outwards. So, it is possible to traverse the entire boundary of $F$, eventually returning to our starting vertex. This implies that the face $F$ is oriented either counterclockwise or clockwise, which finishes the proof.
\end{proof}

Note that the definition of a piecewise natural projection breaks down for even-crossings, because an in-out-in-out pattern in an even-crossing would imply opposite orientations for the two halves of each individual strand in the crossing. 

\begin{definition}
A \textit{Seifert surface} of a link $L \in \mathbb{R}^3$ is an embedded compact surface in $\mathbb{R}^3$ whose boundary is $L$. 
\end{definition}

\begin{definition}
The $3$-genus $g(L)$ of a link $L$ is the least possible  genus realized by any Seifert surface of $L$. 

\end{definition}

{\em Seifert's algorithm}, which takes a $2$-crossing projection of a link $L$ and outputs a Seifert surface of $L$, is described as follows (see Figure \ref{Figure:Seifert}):

\begin{enumerate}
    \item Give each component of $L$ an orientation.
    \item Notice that at every crossing, each strand goes into the crossing and then comes out of the crossing. Connect the part of each strand that goes into the crossing to the part of the other strand which comes out of the crossing.
    \item There will now be a set of oriented circles on the plane. These circles are called Seifert circles. 
    \item Connect the Seifert circles to each other at the original crossings of $L$ using twisted bands.
\end{enumerate}

\begin{figure}[H]
    \captionsetup{width=0.8\linewidth}
    \begin{center}
        \includegraphics[scale=0.8]{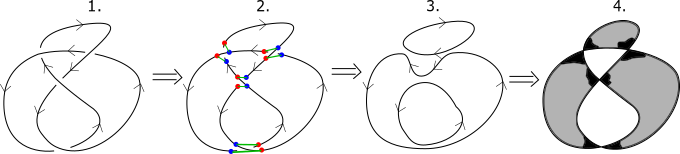}
        \caption{Seifert's algorithm for the $4_1$ knot in the Rolfsen knot table. For a strand at a crossing, the blue dot represents the part of the strand that goes into the crossing and the red dot represents the part of the strand that comes out of the crossing. The green bars indicate the connections that are formed in step $2$ of Seifert's algorithm.}
        \label{Figure:Seifert}
    \end{center}
\end{figure}

The \textit{canonical $3$-genus} of a link $L$, denoted by $g_c(L)$, is the least possible genus of a Seifert surface of a knot that is obtained using the Seifert's algorithm. It follows that for every link, $g_c(L) \geq g(L)$. 

In \cite{FormalKnotTheory}, Kauffman proved that Seifert's algorithm applied to {\em alternative} links (a class of links that includes all alternating links and all torus links) gives a Seifert surface of least genus. In this case, $g_c(L)=g(L)$. However, Moriah (see \cite[p. 105]{TheKnotBook}) found an infinite class of knots where $g_c(L) > g(L)$.

\begin{lemma} \label{l8}
\textit{A piecewise-natural orientation of a diagram $D$ of a link $L$ can be checkerboard-colored so that the faces of $D$ that are oriented clockwise correspond exactly to the white faces of $D$.}
\end{lemma}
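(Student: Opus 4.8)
The plan is to read the coloring directly off the given edge-orientations and then verify, by a local check at each edge, that it is a genuine checkerboard coloring. First I would recall that, by the definition of a piecewise-natural orientation, the boundary of every face of $D$ is consistently oriented, so each face is unambiguously either clockwise or counterclockwise. I then color a face white if it is clockwise and black if it is counterclockwise. This assigns exactly one color to every face, so the coloring is well-defined, and by construction the white faces are precisely the clockwise ones. What remains is to show that this is a valid checkerboard coloring, i.e. that any two faces sharing an edge receive opposite colors.

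The heart of the argument is purely local to a single edge $e$. Fix the arrow on $e$ inherited from $D$, and let $F_1$ and $F_2$ be the two faces lying on either side of $e$. Because $F_1$ and $F_2$ sit on opposite sides of $e$, a counterclockwise traversal of $\partial F_1$ and a counterclockwise traversal of $\partial F_2$ run along $e$ in antiparallel directions: keeping the interior of a face on one's left forces the two boundary traversals to cross their common edge in opposite senses. Consequently the single fixed arrow on $e$ can match the counterclockwise sense of at most one of $F_1, F_2$. Since the orientation is piecewise-natural, the arrow on $e$ must be consistent with the boundary orientation of \emph{each} face it bounds, so it agrees with the counterclockwise sense of exactly one face and with the clockwise sense of the other. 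Hence one of $F_1, F_2$ is clockwise and the other counterclockwise, and they receive opposite colors.

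Since this holds at every edge, the coloring in which clockwise faces are white and counterclockwise faces are black separates every pair of edge-adjacent faces, which is exactly what it means to be a checkerboard coloring; faces meeting only at a crossing are unconstrained, so the $2n$-valence of the vertices plays no role. I expect the main obstacle to be stating the local ``opposite sides'' step rigorously: one must pin down the orientation convention (which side of the arrow counts as the clockwise side for the adjacent face) and then confirm that piecewise-naturality forces the single edge-arrow to respect both adjacent faces at once, which is precisely what makes their orientations disagree. Everything else is bookkeeping.
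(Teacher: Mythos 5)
Your proposal is correct and follows essentially the same route as the paper: color clockwise faces white and counterclockwise faces black, then observe that the single orientation of a shared edge forces the two adjacent faces into opposite rotational senses, so edge-adjacent faces always receive opposite colors. Your articulation of the antiparallel-traversal convention is a slightly more explicit version of what the paper conveys with its figure, but the key observation is identical.
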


\begin{proof} \label{lemma:lemma9}
The key observation is that any two faces of $D$ which share an edge cannot both be oriented clockwise or counterclockwise. Indeed, if the faces $F_1$ and $F_2$ of $D$ share an edge $E$, then the orientations of $F_1$ and $F_2$ are completely determined by the orientation of $E$. As shown in Figure \ref{figure:Figure-8}, if $E$ induces a counterclockwise orientation on $F_1$, then it induces a clockwise orientation on $F_2$. Similarly, if $E$ induces a clockwise orientation on $F_1$, then it induces a counterclockwise orientation on $F_2$.

\begin{figure}[H]
    \captionsetup{width=0.8\linewidth}
    \begin{center}
        \includegraphics[scale=0.8]{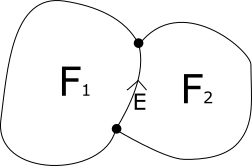}
        \caption{In this diagram, the orientation of the common edge $E$ of faces $F_1$ and $F_2$ forces $F_1$ to be is oriented counterclockwise and $F_2$ to be oriented clockwise.}
        \label{figure:Figure-8}
    \end{center}
\end{figure}

Now, color each face of $D$ that is oriented clockwise white, and color each face of $D$ that is oriented counterclockwise black. Then every face of $D$ is colored and no two adjacent faces are of the same color. So, the overall coloring imposed on the faces of $D$ is a checkerboard coloring.
\end{proof}

\begin{definition}
Let an {\em oriented checkerboard coloring} of a diagram $D$ of a link $L$ be a checkerboard coloring on $D$ for which every white face is oriented clockwise and every black face is oriented counterclockwise.
\end{definition}

Lemma \ref{l8} proves that every odd-crossing diagram $D$ has an oriented checkerboard coloring. 

\begin{lemma} \label{lemma:lemma7}
%{Consider a $(2k+1)$-crossing $C$ in a piecewise-natural projection $P$ of a link $L$. Impose an oriented checkerboard coloring on the corresponding diagram $D$.  Then there exists an algorithm to decompose $C$ into $k(2k+1)$ $2$-crossings so that when Seifert's algorithm is applied locally to those $k(2k+1)$ crossings, $k^2$ Seifert circles are formed with each new Seifert circle being bounded by a white region of $P$. With a slight modification to the algorithm, we can decompose $C$ such that Seifert's algorithm forms $k^2$ new Seifert circles, each of which is bounded by a black region of $P$ instead. }
Consider a $(2k+1)$-crossing $C$ in a piecewise-natural projection $P$ of a link $L$. Impose an oriented checkerboard coloring on the corresponding diagram $D$.  Then there exists an algorithm to decompose $C$ into $k(2k+1)$ $2$-crossings so that 
\begin{enumerate}
    \item When (step $2$ of) Seifert's algorithm is applied  to these new $k(2k+1)$ 2-crossings, $k^2$ Seifert circles are formed. 
    \item In the corresponding diagram $D' \subseteq D$ of the decomposition of $C$ with the induced piecewise-natural orientation, each new Seifert circle is bounded by a white region of $D'$.
\end{enumerate}

 Moreover, with a slight modification to the algorithm, we can decompose $C$ such that Seifert's algorithm forms $k^2$ new Seifert circles, each of which is bounded by a black region of $P$ instead.
\end{lemma}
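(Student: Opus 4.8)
The plan is to localize the entire argument inside a small disk $\Delta$ about $C$. There the $2k+1$ strands appear as oriented arcs through the multiple point with their $2(2k+1)$ endpoints on $\partial\Delta$; by Lemma~\ref{lemma:lemma5} these endpoints alternate in--out around $\partial\Delta$, and we fix the oriented checkerboard colouring of Lemma~\ref{l8} (white $=$ clockwise). A decomposition is a choice of how to push the arcs into general position so that each of the $\binom{2k+1}{2}=k(2k+1)$ pairs crosses exactly once. My first step is to convert (1) and (2) into a planar statement about this arrangement. Applying Seifert's oriented smoothing at a double point connects each incoming strand to the \emph{other} outgoing strand; since the four rays at a transverse double point read (in, in, out, out) cyclically, this pinches off precisely the two opposite ``flow--through'' corners (each bounded by one in--edge and one out--edge), while the ``source'' and ``sink'' corners are left intact. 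A bounded face therefore becomes a closed Seifert loop exactly when every one of its corners is a flow--through corner, i.e. exactly when the face is \emph{coherently oriented}. Thus the new Seifert circles are in bijection with the coherently oriented bounded faces of the arrangement, and the lemma reduces to: choose the decomposition so that there are exactly $k^{2}$ such faces and each is clockwise (hence white).

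The second step is to exhibit the decomposition. A bounded face is coherent precisely when the strand orientations circulate around it, and whether such a face is clockwise or counterclockwise can be flipped by sliding one of its bounding strands across the opposite vertex; this is the freedom I will exploit, as it preserves ``each pair crosses once.'' The symmetric ``concurrent--diameters'' routing is useless here---it yields a single coherent central face---so I would instead prescribe a cascading routing of the $2k+1$ arcs (respecting their fixed endpoints and pairing on $\partial\Delta$) in which the coherent faces appear as a triangular array with rows of sizes $1,3,5,\dots,2k-1$, with every arc slid so that each of these faces is traversed clockwise. Since $1+3+\cdots+(2k-1)=k^{2}$ this gives the required count, and because the clockwise faces constitute one class of a checkerboard colouring they are automatically pairwise non--adjacent and inherit the colour white from the colouring of $D$.

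The heart of the proof, and the step I expect to be the main obstacle, is verifying that this routing really yields exactly $k^{2}$ coherent faces, \emph{all} clockwise. I would argue by induction on $k$: passing from a $(2k-1)$--crossing to a $(2k+1)$--crossing inserts two further strands and $4k-1$ new double points, and I must show the cascading rule contributes exactly $2k-1=k^{2}-(k-1)^{2}$ new coherent clockwise faces and creates no coherent counterclockwise face, with base case $k=1$ (the single clockwise triangle, checked by hand). The two delicate points are (i) controlling the colour---one must rule out any bounded face that is coherent and counterclockwise, which is where the global choice of offsets and the compatibility of ``clockwise $=$ white'' with the checkerboard colouring inherited from $D$ are essential; and (ii) the bookkeeping that the newly created coherent faces are exactly the claimed $2k-1$ and that no previously coherent face is destroyed or duplicated when the two strands are threaded through.

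Finally, for the ``moreover'' clause I would run the mirror--image algorithm: reflecting the internal routing across a diameter of $\Delta$ reverses the clockwise/counterclockwise sense of every bounded face while leaving the boundary data, and hence everything outside $\Delta$, unchanged. The identical count then produces $k^{2}$ new Seifert circles, now each bounding a black region of $P$.
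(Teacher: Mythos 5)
Your first paragraph is a correct and useful reduction, and it matches what the paper does implicitly: under Seifert's smoothing the two ``flow-through'' corners at each double point are pinched off, so a bounded face of the decomposed arrangement closes into a Seifert circle exactly when it is coherently oriented, and the lemma becomes the planar statement that some routing has exactly $k^2$ coherent bounded faces, all clockwise. Your overall plan is also essentially the paper's: the paper's algorithm is precisely a cascading routing (rotate $C$ so a strand with white on its upper left is vertical, then isotope strands upward one at a time moving counterclockwise), it proceeds by induction on $k$ with the $k=1$ triangle as base case, it realizes $k^2$ as a sum over the stages of the induction (the paper counts $2k+1$ new circles per stage, i.e.\ $(k+1)^2-k^2$, rather than your $2k-1$), and it handles the ``moreover'' clause by a symmetry that reverses every face's sense.

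The genuine gap is that your proposal stops exactly where the content of the lemma begins, and you say so yourself: the cascading routing is never defined (you only posit that one exists ``in which the coherent faces appear as a triangular array''), and the verification that it yields exactly $k^2$ coherent faces, all clockwise, is deferred as ``the main obstacle.'' That verification \emph{is} the proof. In the paper it is done by specifying the isotopy concretely and then, in the inductive step, examining the two regions created when the last two strands are threaded through a decomposed $(2k-1)$-crossing: each region is partitioned by the earlier strands into quadrilaterals and one triangle, and a parity argument on the alternating in/out orientations of consecutive strands (your Lemma~\ref{lemma:lemma5}) determines exactly which quadrilaterals are cycles, giving $k$ new coherent faces in one region and $k+1$ in the other, all lying in alternating (hence same-colored, hence white) regions. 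Without an explicit routing and this count, items (1) and (2) are assumed, not proved. A secondary, fixable flaw: your mirror argument needs the reflection axis to pass through the two antipodal endpoints of a single strand. The $4k+2$ marked points on $\partial\Delta$ alternate in/out, so reflection across a generic diameter exchanges in-points with out-points and the reflected routing would not glue to the exterior of $\Delta$; only the special axes preserve the oriented boundary data. The paper sidesteps this by re-running the same algorithm based at a strand whose upper-left region is black, which reverses every edge orientation and hence turns all $k^2$ coherent faces counterclockwise, i.e.\ black.
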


\begin{proof} 

In this proof, we refer to a projection $P$ of $L$ and its corresponding diagram $D$ interchangeably. Specifically, when we mention the color of a specific region of $P$ due an oriented checkerboard coloring on $P$, we refer to the color of the corresponding face of the oriented checkerboard coloring of the diagram $D$ induced by $P$.

Rotate $C$ such that one of the strands passing $C$ is vertical. Label the $(2k+1)$ strands passing through $C$ in counterclockwise order, with the vertical strand labeled $1$. Suppose that the region to the left of the top half of this strand is white. Due to the oriented checkerboard coloring of $D$ which induced the coloring on $P$, the boundary of this white region is oriented clockwise. So, the vertical strand must be oriented downwards.

Decompose $C$ into $k(2k+1)$ $2$-crossings by using the following algorithm: Starting with Strand $2$ and moving counterclockwise, isotope each strand upwards as shown below for $k=2$. (The process is similar for other $k$.)

\begin{figure}[H]
    \begin{center}
        \includegraphics[scale=0.5]{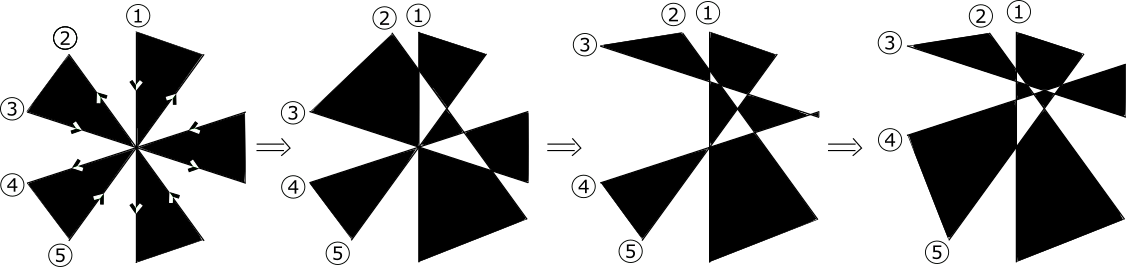}
    \end{center}
\end{figure}

We will now prove the lemma using induction. The base case($k = 1$) is done as in the following figure. Decomposing a $3$-crossing using the algorithm above creates one Seifert circle.

\begin{figure}[H]
    \begin{center}
        \includegraphics[scale=0.5]{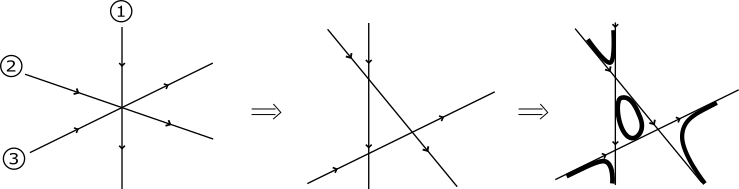}
    \end{center}
\end{figure}

For the inductive step, assume that the lemma is true for a $(2k-1)$-crossing. Add two new dotted strands to the bottom of a $(2k-1)$-crossing and orient them as shown below. The new configuration, with the added strands, is the diagram we would have produced using the algorithm above on a $(2k+1)$-crossing. 

\begin{figure}[H]
    \captionsetup{width=0.8\linewidth}
    \begin{center}
        \includegraphics[scale=0.62]{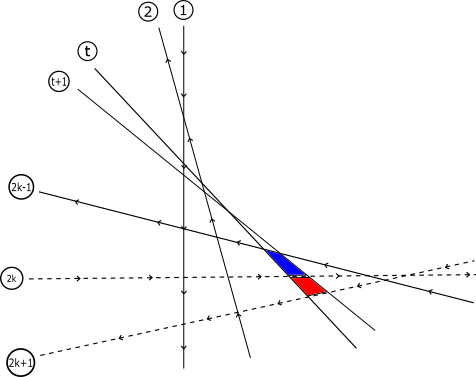}
        
    \end{center}
    \caption{Strands $1$, $2$, $2k-1$, and $2k$, $2k+1$ are drawn. Also drawn are strands $t$ and $t+1$, where $t$ is an arbitrary integer where $2 \leq t \leq 2k-2$. Note that if $t = 2$, then strands $t$ and $2$ coincide. Similarly, if $t = 2k-2$, then strands $2k-2$ and $2k-1$ coincide. }
    \label{figure:figure7}
\end{figure}

The following three paragraphs refer to  Figure \ref{figure:figure7}.

Notice that the region bounded by strands $1$, $2k-1$, and $2k$ is divided into $2k-2$ separate regions by strands $2$ to $2k-2$. $2k-3$ of these regions are quadrilaterals, and $1$ region is a triangle. Each of the $2k-3$ quadrilaterals is created by strand $t$ and $t+1$, where $1 \leq t \leq 2k-3$. Since $t$ and $t+1$ will have opposite orientations, strand $2k-1$ is oriented inwards, and strand $2k$ is oriented outwards, $t$ must be even for the region bounded by strands $t$, $t+1$, $2k-1$, and $2k$ (the blue quadrilateral) to form a cycle and thus a Seifert circle. Additionally, the triangle created by strands $2k$, $2k-1$, and $2k-2$ forms a cycle due to the $k=1$ case of the claim, which is proven above.

Similarly, the region bounded by strands $1$, $2k-1$, and $2k$ is divided into $2k-1$ bounded regions by strands $2$ to $2k-1$. Similar to the last case, each of the $2k-2$ bounded quadrilaterals is created by strand $t$ and $t+1$, where $1 \leq t \leq 2k-2$. Since $t$ and $t+1$ have opposite orientations, $2k$ is oriented outwards, and $2k+1$ is oriented inwards, $t$ must be odd for the region bounded by strands $t$, $t+1$, $2k$, and $2k+1$ (the red quadrilateral) to form a cycle and therefore a Seifert circle. Additionally, the triangle created by strands $2k+1$, $2k$, and $2k-1$ forms a cycle due to the $k=1$ case of the claim, which is proven above.

Finally, we note that $(k-1)+1=k$ new Seifert circles are created in the region bounded by strand $1$, $2k-1$ and $2k$. Similarly, $k+1$ new Seifert circles are created in the region between strand $1$, $2k$ and $2k+1$. This is a total of $2k+1$ new Seifert circles, which gives us $k^2+(2k+1) = (k+1)^2$ Seifert circles in the decomposed $(2k+1)$-crossing projection. Moreover, all the Seifert circles in the projection appear in alternating regions, so they must be of the same color. By construction, they are all white.\\

\begin{figure}[H]
    \captionsetup{width=0.8\linewidth}
    \begin{center}
        \includegraphics[scale=0.5]{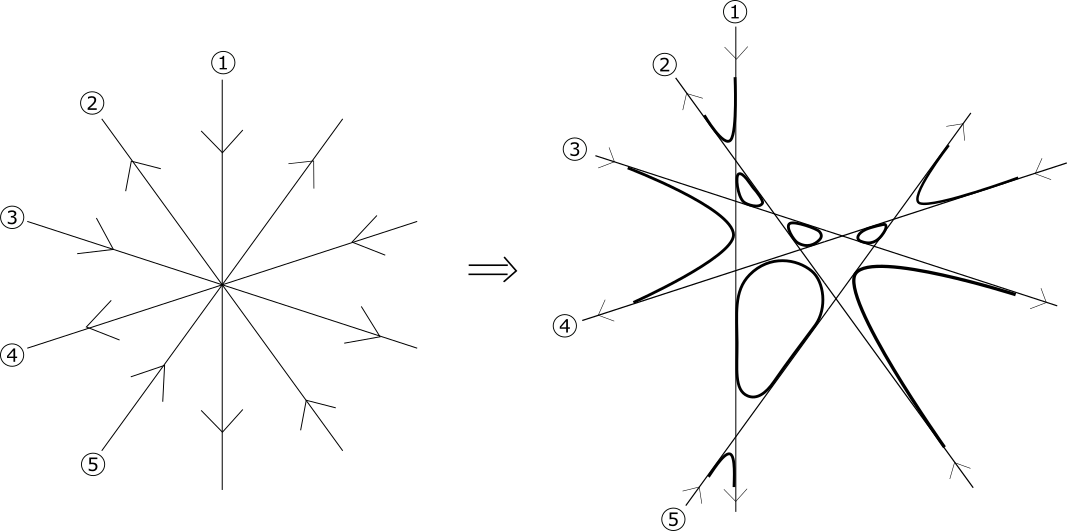}
    \end{center}
    \caption{A visual representation of the $4$ Seifert circles created after decomposing a $5$-crossing using the algorithm presented in the beginning of the proof of Lemma \ref{lemma:lemma7}. Also, notice the partial Seifert circles on the borders of the figure to the right all ``follow" the boundary of a white region.} \label{figure:Figure-10}
\end{figure}

For the ``moreover" part of the lemma, we present an algorithm that decomposes $C$ into $k(2k+1)$ crossings such that applying Seifert's algorithm forms $k^2$ new Seifert circles, each bounded by a black region of $P$. Indeed, rotate $C$ such that one of the strands passing through $C$ is vertical, with the region to the left of the top half of this strand being black. Similar to the case above, the vertical strand must be oriented upwards. Now move counterclockwise and isotope each strand upwards in a similar fashion as the case above. The diagram below shows this process for $k=2$.

\begin{figure}[H]
    \begin{center}
        \includegraphics[scale=0.55]{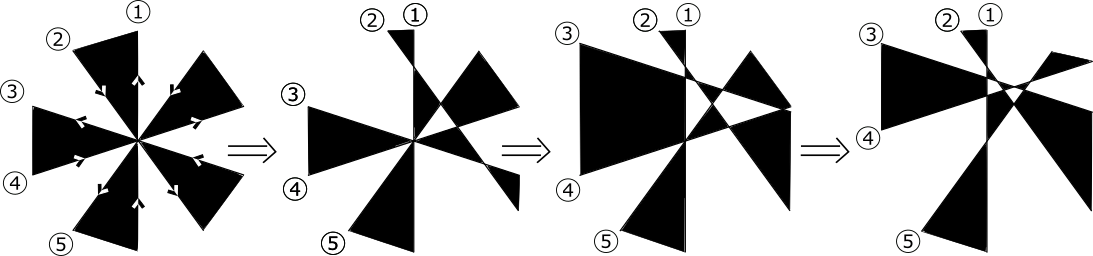}
    \end{center}
\end{figure}

The difference between this case and the previous case is that the orientation of each edge in this case is the opposite of the orientation of the corresponding edge in the previous case. Therefore, the oriented faces in this case are in one-to-one correspondence with the oriented faces in the previous case, with the only difference being that in this case, the orientations are reversed. If we color each oriented face black, $k^2$ black oriented faces are produced by this algorithm.
%Since the roles of black and white are effectively switched, we can conclude that applying Seifert's algorithm after decomposing $C$ will form $k^2$ new Seifert circles, each bounded by a black region of $P$.
\end{proof}

\begin{lemma} \label{lemma:lemma12}
\textit{Assume we decompose every $(2k+1)$-crossing in a $(2k+1)$-crossing projection $P$ of a link $L$ using the algorithm (in Lemma \ref{lemma:lemma7}) that creates $k^2$ Seifert circles which correspond to the white regions of the oriented checkerboard coloring of $P$. Then the number of Seifert circles formed is equal to the number of white regions in $P$. If we instead decompose every crossing so that the $k^2$ Seifert circles correspond to the black regions of the oriented checkerboard coloring of $P$, then the number of Seifert circles formed is equal to the number of black regions in $P$.}
\end{lemma}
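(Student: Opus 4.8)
The plan is to establish the sharper structural statement that, once every crossing of $P$ has been decomposed by the white algorithm of Lemma~\ref{lemma:lemma7}, the Seifert circles of the resulting $2$-crossing diagram are in bijection with the white faces of its oriented checkerboard coloring; the asserted equality of counts is then immediate, and the black version follows word-for-word from the ``moreover'' clause of Lemma~\ref{lemma:lemma7}.

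I would first fix notation, writing $\widetilde{P}$ for the decomposed $2$-crossing diagram, oriented by the piecewise-natural orientation of $P$ and colored by the oriented checkerboard coloring furnished by Lemma~\ref{l8}. The heart of the argument is the claim that at \emph{every} $2$-crossing of $\widetilde{P}$ the orientation-respecting Seifert smoothing is the one that keeps the two white corners apart, so that the smoothing arcs run along the edges of white faces and never fuse a white face with the white face diagonally across the crossing. Inside a single decomposed $(2k+1)$-crossing this is exactly Lemma~\ref{lemma:lemma7}(2): the $k^2$ new Seifert circles bound new white faces, and, as in Figure~\ref{figure:Figure-10}, the arcs reaching the boundary of the crossing region already follow the white faces inherited from $P$. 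For the smoothings along the original edges of $P$, I would appeal to the in-out-in-out structure of Lemma~\ref{lemma:lemma5} together with Lemma~\ref{l8}: because white faces are coherently (clockwise) oriented across the whole diagram, the local white-respecting choices forced inside neighbouring crossings are mutually consistent, so the white-respecting smoothing is forced globally rather than merely crossing-by-crossing.

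Granting this claim, the bijection is short. Each white face $f$ of $\widetilde{P}$ is a disk; tracing $\partial f$ under the white-respecting smoothing, at each corner the smoothing turns us back onto $\partial f$, so $\partial f$ closes up into a single Seifert circle, and distinct white faces give disjoint circles. Conversely every Seifert circle is obtained this way, since every edge of $\widetilde{P}$ separates a white face from a black face and the smoothing hugs the white side, so each arc of the smoothed diagram lies on the boundary of some white face. Hence Seifert circles correspond bijectively to white regions, which is the desired count; interchanging the two colors and using the second algorithm of Lemma~\ref{lemma:lemma7} gives the statement for black regions.

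The step I expect to be the main obstacle is precisely the global consistency asserted in the second paragraph -- checking that the white-respecting reroutings forced inside the various decomposed crossings glue correctly along the original edges, so that each white face is bounded by exactly one closed Seifert circle rather than a union of several open arcs. To make this rigorous I would argue by induction on the number of crossings of $P$, introducing the decomposition of one crossing at a time and verifying that each new decomposition raises the number of white faces and the number of Seifert circles by the same amount (the $k^2$ internal white circles of Lemma~\ref{lemma:lemma7}, together with the boundary arcs whose rerouting is controlled by that lemma), so that the two counts remain equal at every stage.
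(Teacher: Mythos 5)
Your proposal is correct and takes essentially the same approach as the paper: the paper's proof likewise argues that all Seifert circles ``follow'' the boundaries of the white regions (citing its Figure~\ref{figure:Figure-10} and Lemma~\ref{lemma:lemma7}), so that each white region corresponds to the Seifert circle formed by its boundary, with the black case handled symmetrically. If anything, you are more careful than the paper, whose brief proof leaves implicit exactly the global gluing consistency you flag and propose to handle by induction.
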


\begin{proof}
For every crossing in $C$, we decompose the crossing using the algorithm which creates $k^2$ new Seifert circles, each of which is bounded by a white region of an oriented checkerboard coloring of $P$. Because all the Seifert circles  ``follow" the boundaries of the white regions (see Figure \ref{figure:Figure-10}), Seifert's algorithm associates each white region in $P$ with a Seifert circle formed by its boundary. Thus, the number of Seifert circles is equal to the number of white regions in $P$. 

Similarly, we can also decompose every crossing in $C$ using the algorithm which creates $k^2$ new Seifert circles bounded by a black region of an oriented checkerboard coloring of $P$. Again, since all the Seifert circles follow the boundaries of the white regions, Seifert's algorithm associates each black region in $P$ with the Seifert circle formed by its boundary.

\end{proof}

\begin{proposition}[{\cite{MengWang}}]\label{prop:prop11}
Given a projection of a link $L$ with $r$ components, $n$ 2-crossings, and $s$ Seifert circles, applying Seifert's algorithm to the projection yields a Seifert surface of genus $$\frac{2+n+s-r}{2}.$$
\end{proposition}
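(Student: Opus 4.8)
The plan is to construct the Seifert surface $F$ output by Seifert's algorithm explicitly and then extract its genus from its Euler characteristic via the classification of compact orientable surfaces. The first step is the structural observation already built into the description of the algorithm above: the $s$ Seifert circles bound $s$ disjoint disks, and the final step of the algorithm joins these disks by attaching one twisted band at each of the $n$ crossings, producing a single orientable surface $F$ with $\partial F = L$. Thus $F$ is assembled from exactly $s$ disks and $n$ bands.

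The second step is to compute $\chi(F)$ by carefully accounting for the contributions of the $s$ disks and the $n$ bands, noting that the twisting of a band does not affect its intrinsic Euler characteristic. Combining this count with the fact that $F$ is a compact, connected, orientable surface whose boundary $L$ has $r$ components --- so that the classification of surfaces supplies the relation $\chi(F) = 2 - 2g(F) - r$ --- yields a single linear equation for the unknown genus $g(F)$.

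The final step is to solve that equation for $g(F)$, which produces the value
$$g(F) = \frac{2 + n + s - r}{2}$$
asserted in the statement. I expect the main obstacle to lie not in the algebra but in the two geometric facts underlying the Euler-characteristic bookkeeping: that $F$ is connected, which I would reduce to connectedness of the underlying projection, and that $F$ is orientable, so that its genus is well defined and the classification formula applies. I would establish orientability by checking that the orientation of $L$ used in Seifert's algorithm extends coherently across each band, and I would handle a disconnected projection, should one arise, by applying the count to each connected piece separately and summing the contributions.
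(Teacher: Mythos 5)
Your plan --- assemble the surface from the $s$ disks bounded by the Seifert circles joined by $n$ twisted bands, compute the Euler characteristic, and convert to genus via the classification of compact orientable surfaces --- is the standard argument for this fact, and since the paper gives no proof of its own (the proposition is quoted from the cited source), there is no competing approach to compare against; your attention to connectedness and orientability is also appropriate.

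However, your final step does not follow from your own computation, and carrying the algebra through honestly exposes a sign error in the formula you set out to prove. The disk-and-band count gives $\chi(F) = s - n$: each disk contributes $+1$, and attaching a band (a disk glued along two boundary arcs, so $\chi(\text{band}) = 1$ and $\chi(\text{intersection}) = 2$) changes $\chi$ by $1 - 2 = -1$. Substituting into $\chi(F) = 2 - 2g(F) - r$ yields
$$g(F) = \frac{2 + n - s - r}{2},$$
with a \emph{minus} sign on $s$, not the plus sign in the statement and in your conclusion. The $+s$ version fails immediately on the round projection of the unknot ($n = 0$, $s = 1$, $r = 1$), where it would assign genus $1$ to a disk; more generally the genus must decrease, not increase, as the number of Seifert circles grows. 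Note that the paper itself uses the $-s$ version when it applies this proposition in the proof of Theorem \ref{thm:thm1}: there the crossing count is $k(2k+1)n$ and the circle count is $k^2n + kn + 1$, and the displayed bound $g_c(L) \leq \frac{2 + k(2k+1)n - k^2n - kn - 1 - r(L)}{2}$ subtracts the circle count. So the statement as printed contains a typo; your method is correct and proves the corrected formula $g = \frac{2 + n - s - r}{2}$, but as written your "final step" silently flips the sign of $s$ to match the misstated target, which is a genuine gap in the derivation.
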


%\begin{proof}
%See \cite{MengWang}.
%\end{proof}

\section{Proof of Theorem \ref{thm:thm1}}

Consider a planar $(2k+1)$-diagram $D$ of a link $L$, and call the corresponding projection $P$. Furthermore, assume that $D$ has $n$ vertices; each vertex in $D$ corresponds to a crossing in $P$. Each vertex in $D$ has degree $4k+2$, and each edge in $D$ is connected to $2$ vertices. Thus, by counting all the edges connected to each vertex of $D$ and then dividing by $2$ to account for the fact that every edge is counted twice, we obtain that $D$ has exactly $(2k+1)n$ edges. Now, we can apply the Euler Characteristic to find the number of faces in $D$, which is $$F = E-V+2 = (2k+1)n-n+2 = 2(kn+1).$$

Pick a piecewise-natural orientation on $P$. By Lemma \ref{l8}, we have an oriented checkerboard coloring on $D$ and $P$ such that corresponding regions in $D$ and $P$ have the same color. Let $W$ to be the number of white faces in $P$.

 At least $kn+1$ of the faces have to be either white or black. Assume $W \geq kn+1$. Now, we can use Lemma \ref{lemma:lemma7} to decompose each of the $(2k+1)$-crossings in $P$ into $2$-crossings. Each $(2k+1)$-crossing, when decomposed, adds $k^2$ white faces to $P$; so, after decomposing all $n$ crossings, $W \geq k^2n+kn+1$. Moreover, Lemma \ref{lemma:lemma12} also implies that the Seifert Circles obtained after applying Seifert's algorithm to $P$ correspond exactly with the white faces of $P$. Thus, applying Seifert's algorithm to $P$ will yield $k^2n+kn+1$ Seifert circles. Proposition
\ref{prop:prop11} gives
    \[
        \begin{split}
        g_c(L) & \leq \frac{2+k(2k+1)n-k^2n-kn-1-r(L)}{2}
        \\
        &= \frac{k^2n-r(L)+1}{2}.
        \end{split}
    \]

Since the above formula is valid for all $n$-crossing diagrams, we replace $n$ with $c_{2k+1}(L)$ to get that $$g_c(L) \leq \frac{k^2c_{2k+1}(L)-r(L)+1}{2}.$$
    
\noindent Rearranging, we obtain $$c_{2k+1}(L) \geq \frac{2g_c(L)+r(L)-1}{k^2} \geq \frac{2g(L)+r(L)-1}{k^2}.$$
This finishes the proof.

If $W<kn+1$, then we can use the ``moreover" part of Lemma \ref{lemma:lemma7} to decompose each crossing in $P$ and create more black Seifert circles. The proof would proceed in the same way, with the role of black and white colors switched.\\

\begin{remark}
In \cite{Crowell} and \cite{Scherich}, it is proved that for all knots and all alternating links $L$, we have that $\text{span}(\triangle(L)) = 2g(L)+r(L)-1$, where $\triangle(L)$ is the Alexander polynomial of $L$. Thus, the bound from Theorem \ref{thm:thm1} simplifies to $$c_{2k+1}(L) \geq \frac{2g(L)+r(L)-1}{k^2} \geq \frac{span(\triangle(L))}{k^2}.$$ The second $\geq$ sign is actually an equality for alternating links. Moreover, the bound is quite strong for knots, since equality holds for all knots with $\leq 10$ crossings. The knot with the lowest $2$-crossing number  for which this inequality breaks is the Kinoshita-Terasaka knot. The new form of the bound is preferable because computing the Alexander Polynomial of a knot is algorithmic, and computing the genus is often difficult.
\end{remark}
\begin{remark}
Using the KnotInfo website as a resource, we can use Theorem \ref{thm:thm1} to improve the lower bounds on the $5$-crossing numbers on $79$ different knots with $2$-crossing number $\leq 12$ from $2$ (given by \cite{2014arXiv1407.4485A}) to $3$. These knots are:

$11a_{367}$,
$12a_{146}$,
$12a_{369}$,
$12a_{576}$,
$12a_{716}$,
$12a_{722}$,
$12a_{805}$,
$12a_{815}$,
$12a_{819}$,
$12a_{824}$,
$12a_{835}$,
$12a_{838}$,
$12a_{850}$,
$12a_{859}$,
$12a_{864}$,
$12a_{869}$,
$12a_{878}$,
$12a_{898}$,
$12a_{909}$,
$12a_{916}$,
$12a_{920}$,
$12a_{981}$,
$12a_{984}$,
$12a_{999}$,
$12a_{1002}$,
$12a_{1011}$,
$12a_{1013}$,
$12a_{1027}$,
$12a_{1047}$,
$12a_{1051}$,
$12a_{1114}$,
$12a_{1120}$,
$12a_{1128}$,
$12a_{1134}$,
$12a_{1168}$,
$12a_{1176}$,
$12a_{1191}$,
$12a_{1199}$,
$12a_{1203}$,
$12a_{1209}$,
$12a_{1210}$,
$12a_{1211}$,
$12a_{1212}$,
$12a_{1214}$,
$12a_{1215}$,
$12a_{1218}$,
$12a_{1219}$,
$12a_{1220}$,
$12a_{1221}$,
$12a_{1222}$,
$12a_{1223}$,
$12a_{1225}$,
$12a_{1226}$,
$12a_{1227}$,
$12a_{1229}$,
$12a_{1230}$,
$12a_{1231}$,
$12a_{1233}$,
$12a_{1235}$,
$12a_{1238}$,
$12a_{1246}$,
$12a_{1248}$,
$12a_{1249}$,
$12a_{1250}$,
$12a_{1253}$,
$12a_{1254}$,
$12a_{1255}$,
$12a_{1258}$,
$12a_{1260}$,
$12a_{1273}$,
$12a_{1283}$,
$12a_{1288}$,
$12n_{242}$,
$12n_{472}$,
$12n_{574}$,
$12n_{679}$,
$12n_{688}$,
$12n_{725}$,
$12n_{888}$

Finally, Theorem \ref{thm:thm1} allows us to improve the bound on $7$-crossing number of the following knots from $1$ to $2$: $11a_{367}$, $12n_{242}$, $12n_{725}$, $12n_{472}$, $12n_{574}$. 
\end{remark}

\section{Torus Knots}
Applying Theorem \ref{thm:thm1} specifically to the $(p,q)$-torus knots gives 
\begin{equation}\label{eqn:Tpq}
    c_{2k+1}(T_{p,q}) \geq \frac{(p-1)(q-1)}{k^2}
\end{equation}
 which, for large $p$, $q$, improves on the ``trivial" bound (\ref{eq:trivialbound}) by a factor of approximately $2$. Because the span of the Kauffman Bracket Polynomials of the torus knots is $4(p+q-2)$, the bound derived in \cite{2014arXiv1407.4485A} gives worse bounds than the trivial bound. 

Of particular interest are the knots $T_{2,4t+1}$ and $T_{2,4t+3}$. In \cite{2014MPCPS.156..241A}, Adams proves that $c_4(T_{2,4t+1}) = c_4(T_{2,4t+3}) = t+1$ for all integers $t$. Moreover, (\ref{eqn:Tpq}) implies that

\begin{enumerate}
    \item $c_5(T_{2,4t+1}) \geq t$, and
    \item $c_5(T_{2,4t+3}) \geq t+1$.
\end{enumerate}

\begin{remark} \label{remark:remark12}
It is conjectured that $c_n(L) \geq c_{n+1}(L)$ for any  $L$ and any $n$. Most of the knots with $\leq 9$ crossings have been shown to have this property (see \cite{2014arXiv1407.4485A}). However, for $c_4(T_{2,2t+1}) \geq c_5(T_{2,2t+1})$ to hold, we must have 

\begin{enumerate}
    \item $c_5(T_{2,4t+1}) = t$ or $t+1$
    \item $c_5(T_{2,4t+1}) = t+1$
\end{enumerate}

This motivates further investigation into upper bounds on the $5$-crossing numbers of the $2$-torus knots and links.
\end{remark}

\subsection*{Upper Bounds on $c_5(T_{2,t})$}

We use the definition of bigon chains and crossing-covering circles in \cite{2012arXiv1207.7332A}. Let an {\em odd bigon chain} be a bigon chain of odd length.

\begin{lemma} \label{l15}
\noindent A bigon chain of length $3$ can be isotoped into a single $4$-crossing.
\end{lemma}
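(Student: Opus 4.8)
The plan is to understand what a bigon chain of length $3$ looks like and then exhibit an explicit isotopy collapsing it into a single $4$-crossing. A bigon chain is a sequence of bigon faces, each sharing a crossing with the next, so a bigon chain of length $3$ consists of three consecutive bigons bounded by $2$-crossings, arranged like the twist region appearing in the standard diagram of a $2$-torus link. First I would draw the chain of $3$ bigons as a vertical twist region involving two strands, which contains $4$ crossings (a bigon chain of length $\ell$ has $\ell+1$ crossings), and set up coordinates or at least a clear labeling of the four crossings and the strands entering and leaving the region.

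The key idea is that a twist region of four crossings can be gathered into one multicrossing by sliding the four crossings together along the two strands until all four crossing points coincide at a single point. Concretely, I would argue as follows: first isotope the diagram so that the four crossing points of the twist region lie on a common vertical line. Then, performing a planar isotopy that pushes each crossing point toward a chosen central point, one brings all four crossings into coincidence. Because a $4$-crossing is by definition a point where four strands (here, the two strands passing through the region, each contributing two of its local arcs) are superimposed, the collapsed configuration is exactly a single $4$-crossing. I would verify that the over/under information is consistent: in the collapsed picture the four strands stack in a well-defined top-to-bottom order, so the result is a genuine $4$-crossing rather than an ill-defined tangle. Referencing Figure~3, which already shows how a $4$-crossing decomposes into six $2$-crossings, helps make the correspondence precise, although here only the four crossings of the chain are relevant and one must check the collapse is compatible with that decomposition picture.

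The main obstacle I expect is bookkeeping the strand heights during the collapse: I must confirm that sliding the four crossings together does not force two arcs to pass through each other or create a forbidden configuration, and that the resulting order of the four superimposed strands is self-consistent. In other words, the delicate part is checking that the over-under data of the three bigons assemble into a valid total ordering of the four strands at the single multicrossing point. Once the ordering is shown to be consistent, the isotopy itself is a routine planar contraction, and the lemma follows. I would therefore spend the bulk of the argument (and the accompanying figure) on displaying the strand labels before and after the collapse and confirming the height ordering is well defined.
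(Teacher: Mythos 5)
Your proof addresses the wrong configuration, because you have miscounted what ``length $3$'' means here. You take a bigon chain of length $\ell$ to have $\ell+1$ crossings, and so you collapse a twist region with \emph{four} crossings. But the way Lemma~\ref{l15} is used in Proposition~\ref{thm:thm14} forces the other convention: $T_{2,3q}$, whose standard projection has $3q$ crossings, is cut into $q$ \emph{disjoint} bigon chains of length $3$, each of which is isotoped into a $4$-crossing, yielding the bound $\lfloor (t+1)/3\rfloor$. This only makes sense if a length-$3$ chain consists of three crossings (two bigons); disjoint four-crossing chains cannot tile $3q$ crossings. Indeed, the statement you actually prove --- a twist region of four crossings becomes a single $4$-crossing --- is precisely the result of Adams that the paper quotes from \cite{2014MPCPS.156..241A} and uses separately in the even case, so under your reading Lemma~\ref{l15} would be redundant and the odd-case construction would collapse.

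The distinction is not cosmetic: the isotopy for three crossings is necessarily different from the one you describe. A $4$-crossing has four strand-arcs through one point, and its decomposition contains $\binom{4}{2}=6$ double crossings. In your picture each of the two strands contributes two arcs (a $(2,2)$ fold), so $4$ of those $6$ crossings are between the two different strands; such a fold can only absorb a twist region with \emph{four} inter-strand crossings. A chain with three crossings requires the asymmetric fold: one strand passes straight through once, while the other is spiralled so that it passes through the same point three times; the decomposition then consists of the three original inter-strand crossings plus $\binom{3}{2}=3$ self-crossings of the spiralled strand, and this is what the paper's constructive figure exhibits. A secondary gap, relevant even for your own reading: ``sliding the four crossings together'' is not literally possible, since two transverse arcs meeting at one point cross there only once; the collapse must fold the strands and thereby create the extra self-crossings (raising the crossing count of the decomposition from $4$ to $6$), and your appeal to a ``routine planar contraction'' skips exactly this step.
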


\begin{proof}
The proof is by construction.

\begin{figure}[H]
    \begin{center}
        \includegraphics[scale=0.35]{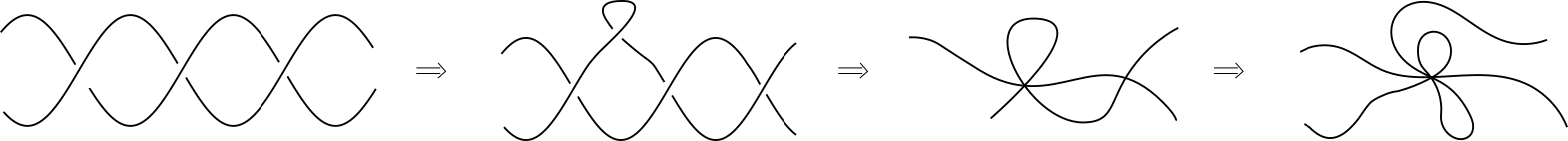}
    \end{center}
\end{figure}
\end{proof}

\begin{theorem} 
We have the following upper bound:
\[ c_5(T_{2,t}) \leq  \left\{
\begin{array}{ll}
      \lceil{\frac{t}{4}}\rceil & \text{if } t \text{ is even},\\
      \lfloor \frac{t+1}{3} \rfloor & \text{if } t \text{ is odd}. \\
\end{array} 
\right. \]
\end{theorem}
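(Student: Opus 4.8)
The plan is to prove the two cases separately, since they rely on different constructions. For both, I would start from the standard closed-$2$-braid diagram of $T_{2,t}$, which is a single chain of $t$ bigons (a ``bigon chain'' of length $t$ in the sense of \cite{2012arXiv1207.7332A}); this diagram has $c_2(T_{2,t}) = t$ and consists of two strands twisting around each other $t$ times. The goal is to group these $t$ ordinary crossings into as few $5$-crossings as possible, using the tools of bigon chains and crossing-covering circles.

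\textbf{Odd case.} Here I would build directly on Lemma \ref{l15}, which packages a bigon chain of length $3$ into a single $4$-crossing. The idea is that a $4$-crossing, having $4$ strands through one point, can be combined with one additional strand passing through to form a $5$-crossing, so a slightly longer segment of the bigon chain can be absorbed. Concretely, I expect that a bigon chain of length $4$ can be isotoped into a single $5$-crossing: take the length-$3$ segment realized as a $4$-crossing by Lemma \ref{l15}, and push the fourth bigon's extra strand through that same crossing point, raising the multiplicity from $4$ to $5$. Covering the odd bigon chain of length $t$ (where $t$ is odd) by consecutive blocks, each of which becomes one $5$-crossing, then gives the count. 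The stated bound $\lfloor (t+1)/3 \rfloor$ suggests that each $5$-crossing in fact absorbs a block whose contribution to the tally is governed by length-$3$ subchains sharing endpoints (crossing-covering circles let adjacent blocks overlap at a strand), so the arithmetic of how many length-$3$ blocks tile a length-$t$ chain with shared boundary crossings is what produces the $3$ in the denominator. I would verify the block count carefully for small odd $t$ (e.g. $t=3,5,7,9$) to pin down the exact tiling and confirm it matches $\lfloor (t+1)/3 \rfloor$.

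\textbf{Even case.} The bound $\lceil t/4 \rceil$ is stronger, so a different and more efficient grouping is needed; here I would exploit that $T_{2,t}$ for even $t$ is a two-component link, giving extra flexibility. The plan is to show that a bigon chain of length $4$ can be absorbed into a single $5$-crossing in a way that, unlike the odd case, does not force overlap and instead genuinely packs four bigons per $5$-crossing. I would construct this explicitly: four consecutive twists involve a bounded region crossed by several strands, and by isotoping the relevant strands toward a common point one superimposes exactly five arcs, yielding one $5$-crossing per four bigons. Covering the length-$t$ chain by $\lceil t/4 \rceil$ such blocks (with the last block possibly shorter, handled by a partial construction or by a smaller multicrossing absorbed into an adjacent block) then gives the bound.

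\textbf{Main obstacle.} The hard part will be the explicit isotopies that realize a length-$4$ bigon segment as a genuine $5$-crossing while respecting how the blocks join along the rest of the knot, i.e. checking that the crossing-covering circles of adjacent blocks are compatible and that no spurious crossings are introduced at the junctions. In particular, I expect the boundary bookkeeping --- whether consecutive blocks share a crossing (odd case, giving the weaker $/3$ rate) or are disjoint (even case, giving the stronger $/4$ rate) --- to be the crux, and the reason the two parities yield different denominators. I would organize the argument around a clean picture of one block and its two ``collar'' strands, prove the single-block claim once, and then reduce the global count to an elementary tiling computation in each parity.
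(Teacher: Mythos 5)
Your proposal assembles the right ingredients (bigon chains, Lemma \ref{l15}, crossing-covering circles) but misses the one idea the paper's proof actually turns on: in both parities, the final passage from $4$-crossings to $5$-crossings is a \emph{global} move, not a local one. The paper first collapses each block of bigons into a $4$-crossing --- a $4$-bigon chain via the result of \cite{2014MPCPS.156..241A} when $t$ is even (a result your proposal never invokes, though it is the engine of that case), a $3$-bigon chain via Lemma \ref{l15} when $t$ is odd --- handles the leftover one or two crossings separately, and then draws a single crossing-covering circle through \emph{all} of the resulting $4$-crossings and pulls one strand of the link around that closed circle, so that it passes through each $4$-crossing exactly once and promotes every one of them to a $5$-crossing simultaneously. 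The fifth strand at each crossing is thus an arc of one closed loop threading the entire diagram. Your plan instead asks each block to become a $5$-crossing by an isotopy supported near that block, and for odd $t$ this is not merely unproven but impossible: if a block is a disk meeting the link in two arcs $A$ and $B$ forming a chain of odd length, then the mod-$2$ count of $A$--$B$ double points is an invariant of isotopy rel boundary (it is determined by how the four endpoints interleave on the boundary circle), and it is odd; whereas any configuration inside the disk whose only multiple points are $5$-crossings built from passes of $A$ and $B$ has $A$--$B$ count a sum of terms $ab$ with $a+b=5$, i.e.\ $4$ or $6$, hence even. Since $t$ odd forces any partition of the twist region into consecutive two-strand blocks to contain an odd block, no scheme of disjoint, locally-converted blocks can produce a $5$-crossing projection --- some strand from outside the block (the paper's covering circle) must supply the odd pass. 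This is also why the paper can turn a $3$-crossing into a $5$-crossing locally (looping a nearby strand adds two passes) but never a $4$-crossing into a $5$-crossing without the global circle, and why your leftover bookkeeping (e.g.\ the lone extra crossing when $t\equiv 1 \bmod 4$) cannot be repaired locally.

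Two further points. First, your odd case is internally inconsistent: you posit four bigons per $5$-crossing, notice this would give a $/4$ rate rather than the stated $/3$, and then retreat to an unspecified scheme of ``overlapping'' blocks; in the paper the $3$-versus-$4$ discrepancy has a different source entirely, namely which ``chain $\to$ $4$-crossing'' move is available before the global $+1$ step (Lemma \ref{l15} packs three bigons, the move of \cite{2014MPCPS.156..241A} packs four), and crossing-covering circles play no role in letting blocks overlap. Second, your even-case move (a $4$-bigon chain isotoped rel boundary into a single $5$-crossing with passes $(2,3)$) is not excluded by the parity argument above, but it is a genuinely new construction that you would have to produce from scratch; the one-sentence description of pushing a fourth strand through the $4$-crossing of Lemma \ref{l15} glosses over the requirement that the new pass cross all existing strands coherently at a single point. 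The economical fix for your outline is to replace ``each block becomes a $5$-crossing'' by ``each block becomes a $4$-crossing, and one crossing-covering circle upgrades them all at once''; with that change, your tiling arithmetic in both parities reduces to the paper's proof.
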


We prove this theorem by dividing it into two cases, based on the parity of $t$. 

\begin{proposition}[$t =$ even]
\textit{If $4|t$, then $c_5(T_{2,t}) \leq \frac{t}{4}$. If $4|(t+2)$, then $c_5(T_{2,t}) \leq \frac{t+2}{4}$.}
\end{proposition}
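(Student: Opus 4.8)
The plan is to build an explicit $5$-crossing projection of $T_{2,t}$ by grouping the crossings of the standard diagram. Recall that the standard diagram of $T_{2,t}$ is a cyclic bigon chain of length $t$: the two strands of the torus link cross $t$ times in succession and the chain closes up on itself. The whole construction rests on one building block, which I would establish first (in the spirit of Lemma \ref{l15}, and prove by an explicit isotopy displayed in a figure): \emph{a bigon chain of length $4$ can be isotoped, rel its four boundary endpoints, into a single $5$-crossing.} Concretely, I would route one of the two strands through the crossing point four times and the other once, so that the $2$-crossing decomposition produces exactly the four same-signed inter-strand crossings of $\sigma_1^4$ together with self-crossings that are removable, and then check from the figure that the over/under data reproduces the original tangle. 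The relevant notions of bigon chains and crossing-covering circles are exactly those of \cite{2012arXiv1207.7332A}.

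Given this building block, the case $4 \mid t$ is immediate: I would partition the cyclic bigon chain of length $t$ into $t/4$ consecutive sub-chains, each of length $4$, and collapse each sub-chain into a single $5$-crossing by the building-block isotopy. Because each collapse is an ambient isotopy fixing the boundary of its sub-chain, gluing the collapsed blocks back around the cycle reproduces $T_{2,t}$ exactly, now drawn with $t/4$ five-crossings. Hence $c_5(T_{2,t}) \le t/4$.

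For $4 \mid (t+2)$, i.e.\ $t \equiv 2 \pmod 4$, I would split the cyclic chain into $(t-2)/4$ sub-chains of length $4$ plus one leftover sub-chain of length $2$. The length-$4$ blocks collapse as before. For the leftover length-$2$ block I would give a short companion construction showing that a bigon chain of length $2$ can likewise be absorbed into a single $5$-crossing: route the two strands through one point with an added trivial coil, so that five segments pass through the point while the decomposition still contributes only the two genuine crossings. This produces one additional $5$-crossing, for a total of $(t-2)/4 + 1 = (t+2)/4$, giving $c_5(T_{2,t}) \le (t+2)/4$.

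The main obstacle is the building-block isotopy itself: verifying that four (and, in the leftover case, two) consecutive same-signed crossings really can be gathered at a single point as a bona fide $5$-crossing, with a consistent top-to-bottom strand order and with the looping self-crossings being precisely the removable ones, so that the collapsed tangle is isotopic rel boundary to the original. This is essentially a picture-proof, so the care lies in drawing the isotopy unambiguously and confirming that the strand count (here the $4+1$ split, and the padded $2$-block) yields crossing order exactly $5$. Once the figures are correct, the partition-and-count argument and the invariance of the link type under boundary-fixing isotopy are routine.
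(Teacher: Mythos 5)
Your proposal reaches the correct counts, but by a genuinely different route from the paper's. The paper never collapses a length-$4$ chain directly into a $5$-crossing: it quotes \cite{2014MPCPS.156..241A} for the fact that a $4$-bigon chain can be rearranged into a single $4$-crossing (a $2{+}2$ routing, each strand passing through the point twice), and then applies one \emph{global} move --- pushing an arc of the link around a crossing-covering circle that meets the diagram only at the crossings --- which upgrades all of those $4$-crossings to $5$-crossings simultaneously; in the case $t \equiv 2 \pmod 4$, the two leftover $2$-crossings are first merged into a $3$-crossing and then padded up to a $5$-crossing, essentially as in your leftover block. Your route replaces both ingredients with a single \emph{local} building block: the $4{+}1$ collapse of a length-$4$ chain into one $5$-crossing. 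Be aware that this building block is strictly stronger than anything you can cite: it is not the $2{+}2$ statement of \cite{2014MPCPS.156..241A}, and it is the next case of the family whose length-$3$ instance is Lemma \ref{l15}, so it is the crux of your argument and genuinely requires the figure-proof you defer. I believe it is true, and your sketch has the right checkpoints: order the heights so that the first and third passes of the coiled strand lie above the straight strand and the second and fourth below (e.g.\ $a_1, a_3, b, a_2, a_4$ from top to bottom), which reproduces the alternating over/under pattern of a twist region with all four inter-strand crossings of the same sign, and place the three petals in disjoint angular sectors so they cross nothing away from the multicrossing point. Note this is also the limit of the idea: a length-$6$ chain can never be collapsed into a single $5$-crossing, since Theorem \ref{thm:thm1} gives $c_5(T_{2,6}) \geq 5/4 > 1$, which is why neither proof can do better than blocks of length $4$. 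The trade-off is clear: the paper's argument uses only quotable results plus a global move whose required circle exists trivially in the round torus diagram, whereas yours, once the building-block isotopy is actually drawn and verified, is local and self-contained, and would apply verbatim to long twist regions in any diagram without needing a circle that threads all the collapsed crossings.
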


\begin{proof}
In \cite{2014MPCPS.156..241A}, it is shown that a $4$-bigon chain can be rearranged into a single $4$-crossing. We use this fact repeatedly in our construction.

If $4|t$, then each of the $\frac{t}{4}$ consecutive $4$-bigon chains in $T_{4,t}$ can be rearranged into a $4$-crossing. Then we can use a crossing-covering circle to change all the $4$-crossings into $5$-crossings.

If $4|(t+2)$, then we can divide the crossings of $T_{4,t}$ into $\frac{t-2}{4}$ sets of $4$-bigon chains, with two consecutive $2$-crossings left over. Each of the $4$-bigon chains can be rearranged into a single $4$-crossing, while the two remaining $2$-crossings can be rearranged into a single $3$-crossing. Finally, all the $4$-crossings can be converted into $5$-crossings through the use of a crossing-covering circle, and the $3$-crossing can be turned into a $5$-crossing.

These two processes are shown below for the cases $t=8$ and $t=10$.\\

\begin{figure}[H]
    \begin{center}
        \includegraphics[scale=0.7]{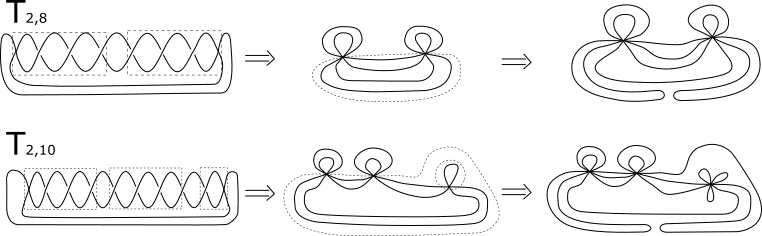}
    \end{center}
\end{figure}
\end{proof}

\begin{proposition}[$t = $ odd] \label{thm:thm14}
\textit{If $t=3q+k$ is odd, $k \in \{0,1,2\}$, then $c_5(T_{2,3q+k})$ is bounded above by the following piecewise function:}

\[ f(q) =   \left\{
\begin{array}{ll}

      q & \text{if } k = 0,1 \pmod 3\\
      q+1 & \text{if } k = 2 \pmod 3 \\
\end{array} 
\right. \]
\end{proposition}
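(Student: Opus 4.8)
The plan is to exploit the standard diagram of $T_{2,t}$, which is the closure of the $2$-braid $\sigma_1^t$ and hence a single bigon chain of length $t$ (a twist region containing $t$ consecutive $2$-crossings). I would partition these $t$ crossings into consecutive sub-chains, collapse each sub-chain into a single $3$- or $4$-crossing, and then promote every resulting multicrossing to a $5$-crossing by means of crossing-covering circles, exactly as in the even case. The only building blocks needed are the three reductions already available: a length-$2$ sub-chain (two consecutive $2$-crossings) isotopes to a single $3$-crossing, a length-$3$ sub-chain isotopes to a single $4$-crossing by Lemma \ref{l15}, and a length-$4$ sub-chain isotopes to a single $4$-crossing (the fact from \cite{2014MPCPS.156..241A} used in the even case). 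Since a crossing-covering circle raises the order of each crossing it passes through by $1$, a single such circle turns every $4$-crossing into a $5$-crossing, and routing it through a $3$-crossing twice turns that crossing into a $5$-crossing; this is precisely the promotion step already carried out in the even case.

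For $k=0$ I would write $t=3q$ and cut the chain into $q$ consecutive length-$3$ sub-chains; each becomes a $4$-crossing, hence a $5$-crossing, giving $c_5(T_{2,3q}) \le q$. For $k=1$ the naive cut into $q$ length-$3$ sub-chains leaves one stray $2$-crossing, which would cost an extra $5$-crossing; instead I would regroup as $t = 3(q-1)+4$, i.e.\ $q-1$ length-$3$ sub-chains and one length-$4$ sub-chain. Every sub-chain now collapses to a $4$-crossing, for a total of $(q-1)+1=q$ four-crossings, and promoting each yields $c_5(T_{2,3q+1}) \le q$. (For the degenerate value $q=0$, $T_{2,1}$ is the unknot and the bound is trivial, so I may assume $q \ge 1$; the parity of $t$ forces $q$ even here and $q$ odd when $k=0,2$.)

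For $k=2$ I would write $t = 3q+2$ and cut the chain into $q$ length-$3$ sub-chains together with one leftover length-$2$ sub-chain. The $q$ length-$3$ sub-chains give $q$ four-crossings and the length-$2$ sub-chain gives one $3$-crossing. Promoting the $q$ four-crossings (one pass each) and the single $3$-crossing (two passes) to $5$-crossings produces $q+1$ five-crossings in all, so $c_5(T_{2,3q+2}) \le q+1$, matching $f(q)$.

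The substance of the argument lies in the bookkeeping of the remainders, so the main obstacle is not a hard computation but the organizational point highlighted in the $k=1$ case: a single leftover $2$-crossing must never be promoted on its own (that would give only the weaker bound $q+1$), and the chain must instead be repackaged so that the remainder is absorbed into a length-$4$ block. The one technical input I am importing is the behaviour of crossing-covering circles from \cite{2012arXiv1207.7332A}, in particular that a circle can be routed through a chosen crossing twice so as to raise its order by $2$ while leaving the underlying link type unchanged; but this is exactly the tool used in the even-case proof above, so no new machinery is required.
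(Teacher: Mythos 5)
Your proposal is correct and rests on the same skeleton as the paper's proof---cut the twist region of $T_{2,t}$ into consecutive sub-chains, collapse each sub-chain into a single multicrossing, and promote everything to $5$-crossings with a crossing-covering circle---but you handle the leftover crossings differently, and that bookkeeping is the crux of the argument. For $k=1$ the paper keeps the naive decomposition into $q$ length-$3$ chains plus one stray $2$-crossing, routes the crossing-covering circle through that $2$-crossing as well, and then \emph{eliminates} it by pulling its overstrand around the circle; the same pull simultaneously promotes the $q$ resulting $4$-crossings to $5$-crossings. You instead regroup $t=3(q-1)+4$ so that the stray crossing is absorbed into a length-$4$ bigon chain, which collapses to a single $4$-crossing by the fact from \cite{2014MPCPS.156..241A} already invoked in the even case; this avoids the elimination trick entirely and uses only the two collapse moves (Lemma \ref{l15} and the $4$-bigon-chain fact) plus the standard promotion. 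Similarly, for $k=2$ the paper eliminates one of the two leftover $2$-crossings by the pulling trick, which raises the other to a $3$-crossing, and then loops a strand over that $3$-crossing twice; you collapse the two leftover $2$-crossings directly into a $3$-crossing and promote it with two passes. Both routes are valid and give the same counts $q$, $q$, $q+1$; yours is slightly leaner in machinery (no crossing-elimination move is ever needed), while the paper's keeps a uniform decomposition into length-$3$ chains across all three cases and lets the crossing-covering circle absorb the remainder. One small caveat: your phrase ``routing the circle through a $3$-crossing twice'' stretches the definition of a crossing-covering circle in \cite{2012arXiv1207.7332A}; what is actually meant---and what the paper does---is looping the pulled strand over that crossing twice, which is legitimate but should be stated as a strand move rather than as a property of the circle itself.
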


\begin{proof}
The proof is by construction. We break it up into 3 cases:

\underline{Case 1: $q$ is divisible by $3$.} In this case, $T_{2,3q}$ is essentially equivalent to $q$ consecutive odd bigon chains, each with length $3$. Take each bigon chain of length $3$ and isotope it into a $4$-crossing (see Lemma \ref{l15}). Afterwards, draw a dotted crossing circle that passes exactly once through each of the $q$ resulting $4$-crossings, and pull the ``bottom" strand over to create a $5$-crossing projection of $T_{2,3q}$.

\underline{Case 2: $q \equiv 1 \pmod 3.$} Here, $T_{2,3q+1}$ can be broken up into $q$ consecutive odd bigon chains, each with length $3$, and $1$ remaining $2$-crossing. Take each bigon chain of length $3$ and isotope it into a $4$-crossing. Afterwards, draw a dotted crossing-covering circle that passes exactly once through each of the $q$ resulting $4$-crossings and the remaining $2$-crossing. Then pull the overstrand of the $2$-crossing around the crossing-covering circle to eliminate the remaining  $2$-crossing. We are left with a $5$-crossing projection of $T_{2,3q+1}$.

\underline{Case 3: $q \equiv 2 \pmod 3.$} Here, $T_{2,3q+2}$ can be broken up into $q$ consecutive odd bigon chains, each with length $3$, and $2$ remaining $2$-crossings. Take each bigon chain of length $3$ and isotope it into a $4$-crossing. Afterwards, draw a dotted crossing-covering circle that passes exactly once through each of the $q$ resulting $4$-crossings and the two remaining $2$-crossings. Pull the overstrands of one of the $2$-crossings around the crossing-covering circle to get $q$ total $5$-crossings and one $3$-crossing. Finally, turn the last $3$-crossing into another $5$-crossing by looping one strand over the crossing twice.

As an example, the constructions for $T_{2,5}, T_{2,7}$, and $T_{2,9}$ are shown below.

\begin{figure}[H]
    \begin{center}
        \includegraphics[scale=0.55]{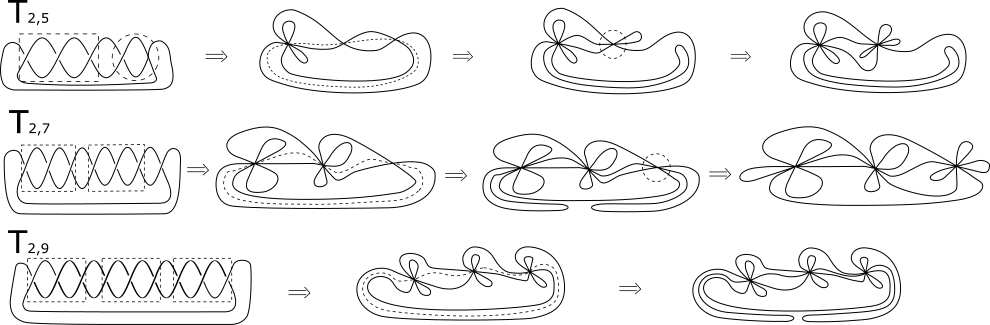}
    \end{center}
\end{figure}
\end{proof}

Proposition \ref{thm:thm14} is sharp for the $T_{2,3}$, $T_{2,5}$, and $T_{2,7}$ torus knots, which are the only $(2,q)$-torus knots (to the best of the author's knowledge) whose $5$-crossing numbers are known (see \cite{2014arXiv1407.4485A}). Additionally, combining Proposition \ref{thm:thm14} and Theorem \ref{thm:thm1}, we get that $c_5(T_{2,9}) \in \{2,3\}$, $c_5(T_{2,11}) = \{3,4\}$, $c_5(T_{2,13}) \in \{4,5\}$, $c_5(T_{2,15}) \in \{4,5\}$, etc. 

\section{Conclusion and Future Work}
In this paper, we proved a new bound on odd crossing numbers of multicrossing projections of knots and links.

In the future, one possible direction could be to develop an analog of piecewise natural projections for even crossing projections. This could be used to generalize Theorem \ref{thm:thm1} to arbitrary multicrossing projections. 

Another possible avenue of research is proving that multicrossing numbers are monotonic; i.e, $c_n(L) \geq c_{n+1}(L)$ for any $n$ and any link $L$. By Remark $\ref{remark:remark12}$, this would prove that for all $k$, $c_5(T_{4k+3,2}) = k+1$. On the other hand, in order to disprove the monotonicity of multicrossing numbers, we can try proving that $c_5(T_{11,2}) = 4$, or that Theorem \ref{thm:thm14} actually gives equality.

Finally, there are many unknown multicrossing numbers of knots with crossing number $\leq 9$ (see the table in \cite{2014arXiv1407.4485A}). It would be helpful to fill the rest of the table with exact numbers. 

\section{Acknowledgements}

My deepest gratitude goes to Dr. Subhadip Dey (Gibbs Assistant Professor, Yale University), who suggested this research topic and helped me navigate the field through good humor, patience, constant support, and knowledge. He is a source of continued encouragement who tickled my curiosity and built my confidence. 
%\end{multicols}

\bibliographystyle{plain}
\bibliography{Citations}

\end{document}